\newcommand{\stkout}[1]{\ifmmode\text{\sout{\ensuremath{#1}}}\else\sout{#1}\fi}
\newtheorem{theorem}{Theorem}[section]
\newtheorem{remark}[theorem]{Remark}
\newtheorem{hypothesis}[theorem]{Assumption}
\newtheorem{lemma}[theorem]{Lemma}
\newtheorem{prop}[theorem]{Proposition}
\newtheorem{definition}[theorem]{Definition}
\newtheorem{example}[theorem]{Example}
\newcommand{\cp}[2]{\langle#1,#2\rangle}
\newcommand{\ds}{\displaystyle}
\newcommand{\tr}{\mathop{\mathrm{Tr}}\nolimits}
\def \R{\mathbb{R}}
\def\erre{\mathbb{R}}
\definecolor{red}{rgb}{1.0,0.0,0.0}
\definecolor{blu}{rgb}{0.0,0.0,1.0}
\definecolor{gre}{rgb}{0.03,0.50,0.03}
\title[Optimal control of stochastic delay differential equations]{Optimal control of stochastic delay differential equations and applications to path-dependent financial and economic models}
\date{\today}
\author[De Feo]{Filippo De Feo}
\address{F.~De Feo: Department of Mathematics, Politecnico di Milano, Piazza Leonardo da Vinci 32, 20133 Milano, Italy}
\email{\href{mailto:filippo.defeo@polimi.it}{filippo.defeo@polimi.it}}
\author[Federico]{Salvatore Federico}
\address{S.~Federico: Dipartimento di Economia, Universit\`a di Genova, Via F.\ Vivaldi 5, 16126, Genova, Italy}
\email{\href{mailto:salvatore.federico@unige.it}{salvatore.federico@unige.it}}
\author[\'{S}wi\k{e}ch]{Andrzej \'{S}wi\k{e}ch}
\address{A. \'{S}wi\k{e}ch: School of Mathematics, Georgia Institute of Technology, 686 Cherry Street, Atlanta, GA 30332, USA}
\email{\href{mailto:swiech@math.gatech.edu}{swiech@math.gatech.edu}}
\numberwithin{equation}{section}
\begin{document}

\begin{abstract}
In this manuscript we consider a class  optimal control problem for stochastic differential delay equations. First, we rewrite the problem in a suitable infinite-dimensional Hilbert space. Then, using the dynamic programming approach, we characterize the value function of the problem as the unique viscosity solution of the associated infinite-dimensional Hamilton-Jacobi-Bellman equation.  Finally, we prove a $C^{1,\alpha}$-partial regularity of the value function. We apply these results to path dependent financial and economic problems (Merton-like portfolio problem and optimal advertising).
\end{abstract}

\maketitle

\section{Introduction}
In this paper, we consider a class of stochastic optimal control problems with infinite horizon with delays in the state equation. Precisely, the state equation is a stochastic delay differential equation (SDDE) in $\R^{n}$ of the form
\begin{equation*}
dy(t) = \ds b_0 \left ( y(t),\int_{-d}^0 a_1(\xi)y(t+\xi)\,d\xi ,u(t) \right)
         dt 
\ds  + \sigma_0 \left (y(t),\int_{-d}^0 a_2(\xi)y(t+\xi)\,d\xi ,\, u(t) \right)\, dW(t),
\end{equation*}
with initial data $y(0)=x_{0}$ and $y(\xi)=x_1(\xi)$ for $\xi\in[-d,0]$. Here, $u$ is a control process ranging in a suitable set of admissible processes $\mathcal{U}$, and the
the goal is to minimize, for $u(\cdot)\in\mathcal{U}$,
a  functional  of the form
\begin{equation*}
J(x;u(\cdot)) =
\mathbb E\left[\int_0^{\infty} e^{-\rho t}
l(y(t),u(t)) dt \right].
\end{equation*}

 Our goal is to employ the dynamic programming approach and characterize the value function $V$ for the problem as the unique solution of the Hamilton-Jacobi-Bellman (HJB) equation in an appropriate sense, and prove its suitable regularity properties, having in mind construction of optimal feedback controls. As it is well known, the main difficulty for delay problems is in the lack of Markovianity, which prevents a direct application of the dynamic  programming method. In fact, even though the dynamic programming principle can be proved, see \cite{larssen}, it is  not immediately clear how to derive an HJB equation, which is, in general, intrinsically infinite-dimensional, as the initial datum $x_1$ is a function. If the delay kernels $a_1,a_{2}$ have a special structure, the HJB equation can be  reduced to a finite-dimensional one (see, e.g.,  \cite{larssen2}). However, this is not the case in general and other approaches are needed to tackle the problem. A possible method consists in developing and  using an It\^o's formula based on differential calculus for equations with delay, leading to a theory of the so called 
 path-dependent PDEs (see, e.g., \cite{bayraktar_keller_2018, bayraktar_keller_2022, cosso_federico_gozzi_rosestolato_touzi, ekren_2014, ekren_2016, ekren_2016b, ren_touzi_zhang, ren_rosestolato} and the references therein). Another approach, which is the one we follow here, is to lift the state equation to an infinite-dimensional Banach or Hilbert space (depending on the regularity of the data), in order to regain Markovianity. This is done at a cost of of moving to infinite-dimension.\footnote{For the procedure of  rewriting deterministic delay differential equations, we refer the readers to \cite[Part II, Chapter 4]{delfour}. For the stochastic case, one may consult \cite{choj78, DZ96, gozzi_marinelli_2004, gozzi_marinelli_savin_2006, tankov_federico} for the Hilbert case and
\cite{mohammed_1984, mohammed_1998, flandoli_zanco} for the Banach case. A ``mixed'' approach is employed in  \cite{federico_2011}. } 
To be more precise, the state equation and the cost functional are then rewritten in a suitable infinite dimensional space as
 \begin{equation*}
dY(t) = [A Y(t)+b(Y(t),u(t))]dt + \sigma(Y(t),u(t))\,dW(t), \ \ \ \ Y(0) = x=(x_{0},x_1),
\end{equation*}
and 
\begin{equation*}
J(x;u(\cdot)) =
\mathbb E \left[ \int_0^{\infty} e^{-\rho t}L(Y(t),u(t))dt \right],
\end{equation*}
 with suitable $A,b,\sigma,L$. This is explained in Section \ref{sec:infinite_dimensional_framework}.
Once this is done, one may try to employ techniques of stochastic optimal control in infinite dimensional spaces and study the associated infinite dimensional HJB equation. 
We approach this infinite dimensional HJB equation by means of viscosity solutions, whose theory is developed better in Hilbert spaces (see, e.g.,  \cite{CL1991, LY} for first order equations and deterministic problems;  \cite{fgs_book} for second order equations and stochastic problems). Thus we take the data allowing to rewrite the state equation in the Hilbert space $X:=\mathbb R^n \times L^2([-d,0];\mathbb R^n)$.
The HJB equation on $X$ has the form
\begin{equation*}
\right).
\end{align*}
\item
 If $C>0$ is the constant in  \eqref{eq:b_sublinear} and \eqref{eq:G_bounded}, then, for every $x \in X, p, q \in X,  Y,Z \in \mathcal{S}(X)$,
\begin{align}\label{eq:Hamiltonian_local_lip}
| H(x, p+q,Y+Z)-H(x, p, Y)| \leq C\left(1+|x|_X\right)|q_0|+\frac{1}{2}C^2\left(1+|x|_X\right)^{2}|Z_{00}|.
\end{align}

\end{enumerate}
\end{lemma}


The Hamilton-Jacobi-Bellman (HJB) equation associated with the optimal control problem is the infinite dimensional PDE
\begin{equation}
\label{eq:HJB}
\rho v(x) - \cp{\tilde A x}{Dv(x)} + H(x,Dv(x),D^2v(x))=0,
\quad x \in X.
\end{equation}
We recall the definition of $B$-continuous viscosity solution from \cite{fgs_book}.
\begin{definition}\label{def:test_functions}
\begin{itemize}
\item[(i)]  $\phi \colon X \to \mathbb R$ is a regular test function if
\begin{small}
\begin{align*}
\phi \in \Phi := \{ \phi \in C^2(X): \phi \textit{ is }B\textit{-lower semicontinuous and }\phi, D\phi , D^2\phi , A^*  D\phi \textit{ are uniformly continuous on }X\};
\end{align*}
\end{small}
\item[(ii)] $g \colon X \to \mathbb R$  is a radial test function if
\begin{align*}
g \in \mathcal G:= \{ g \in C^2(X): g(x)=g_0(|x|_X) \textit{ for some } g_0 \in C^2([0,\infty)) \textit{  non-decreasing}, g_{0}'(0)=0 \}.
\end{align*}
\end{itemize}
\end{definition}
\begin{flushleft}
Note that, if $g\in\mathcal{G}$, we have 
\end{flushleft}
\begin{align}\label{eq:gradient_radial}
D g(x)=\left\{\begin{array}{l}
g_0^{\prime}(|x|_{X}) \frac{x}{|x|_{X}}, \quad \ \ \ \mbox{if} \  x \neq 0, \\
0,  \quad\quad  \ \ \ \ \ \ \ \ \ \ \ \ \,\,\,  \ \mbox{if} \ x=0.
\end{array}\right.
\end{align}
We say that a function is locally bounded if it is bounded on bounded subsets of $X$.
\begin{definition}\label{def:viscosity_solution}
\begin{enumerate}[(i)]
\item A locally bounded $B$-upper semicontinuous function $v:X\to\R$ is a viscosity subsolution of \eqref{eq:HJB} if, whenever $v-\phi-g$ has a local maximum at $x \in X$ for $\phi \in \Phi, g \in \mathcal G$, then 
\begin{equation*}
\rho v(x) - \cp{ x}{\tilde A^* D\phi(x)}_{X} + H(x,D\phi(x)+Dg(x) ,D^2\phi(x)+D^2g(x))\leq 0.
\end{equation*}
\item 
A locally bounded $B$-upper semicontinuous function $v:X\to\R$ is a viscosity supersolution of \eqref{eq:HJB} if, whenever $v+\phi+g$ has a local minimum at $x \in X$ for $\phi \in \Phi$, $g \in \mathcal G$, then 
\begin{equation*}
\rho v(x) + \cp{ x}{\tilde A^* D\phi(x)}_{X} + H(x,-D\phi(x)-Dg(x) ,-D^2\phi(x)-D^2g(x))\geq 0.
\end{equation*}
\item 
A viscosity solution of \eqref{eq:HJB} is a function $v:X\to\R$ which is both a viscosity subsolution and a viscosity supersolution of \eqref{eq:HJB}.
\end{enumerate}
\end{definition}
Define
$$\mathcal{S}:=\{u \colon X \to \mathbb{R}:  \exists k\geq 0 \ \mbox{satisfying \eqref{eq:k_set_uniqueness} and } \tilde C\geq 0 \,\mbox{such that}\,  |u(x)|\leq  \tilde C (1+|x|_X^k)\},$$
where
\begin{small}
\begin{equation}\label{eq:k_set_uniqueness}
\begin{cases}
k<\frac{\rho}{C+\frac{1}{2} C^{2}}, \ \ \ \ \ \ \ \ \ \ \ \ \ \ \ \quad \mbox{ if } \ \frac{\rho}{C+\frac{1}{2} C^{2}} \leq 2, \\
C k+\frac{1}{2} C^{2} k(k-1)<\rho,  \quad \mbox{ if } \frac{\rho}{C+\frac{1}{2} C^{2}}>2, 
\end{cases}
\end{equation}
\end{small}
and $C$ is the constant appearing in \eqref{eq:b_sublinear} and \eqref{eq:G_bounded}.

We can now state the theorem characterizing $V$ as the unique viscosity solution of \eqref{eq:HJB} in $\mathcal{S}$.

\begin{theorem}\label{th:existence_uniqueness_viscosity_infinite}
Let Assumptions \ref{hp:state}, \ref{hp:cost}, and \ref{hp:discount} hold.
The value function $V$ is the unique viscosity solution of \eqref{eq:HJB} in the set $\mathcal{S}$.
\end{theorem}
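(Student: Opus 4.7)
The plan is to reduce the claim to the general existence and comparison theory for $B$-continuous viscosity solutions of second-order HJB equations in Hilbert spaces, as developed in \cite[Chapter 3]{fgs_book}. Most of the abstract framework has already been assembled in the preceding sections: $\tilde A$ is maximal dissipative (Proposition \ref{prop:Amaxdiss}), $B$ satisfies the weak $B$-condition with $C_{0}=0$ (Proposition \ref{prop:weak_b}), the coefficients $\tilde b,\sigma,L$ enjoy the regularity with respect to $|\cdot|_{-1}$ established in Lemma \ref{lemma:regularity_coefficients_theory}, $V$ has polynomial growth of order $m$ (Proposition \ref{prop:growth_V}) and is $B$-continuous (Proposition \ref{prop:V_continuous_norm_-1}), and the Hamiltonian $H$ satisfies the structure conditions of Lemma \ref{lemma:properties_H}. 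Assumption \ref{hp:discount} combined with \eqref{eq:k_set_uniqueness} guarantees that $V\in\mathcal{S}$ (via Proposition \ref{prop:growth_V}).

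\textbf{Existence.} The first step is to establish the dynamic programming principle: for every $x\in X$ and every $t\geq 0$,
\begin{equation*}
V(x)=\inf_{u(\cdot)\in\mathcal{U}}\mathbb E\left[\int_{0}^{t}e^{-\rho s}L(Y^{x,u}(s),u(s))ds+e^{-\rho t}V(Y^{x,u}(t))\right],
\end{equation*}
which follows from the Markov property of the mild solution of \eqref{eq:abstract_dissipative_operator} and standard arguments (cf.\ \cite{fgs_book}); alternatively it can be transported from the SDDE formulation via the DPP of \cite{larssen} and Proposition \ref{prop:equiv}. To verify the subsolution property I take $(\phi,g)\in\Phi\times\mathcal{G}$ with $V-\phi-g$ attaining a local maximum at $x^{\ast}$, apply the DPP on $[0,h]$ with a constant control $u(\cdot)\equiv u\in U$, use the local bound
\begin{equation*}
V(Y^{x^{\ast},u}(h))\leq V(x^{\ast})-(\phi+g)(x^{\ast})+(\phi+g)(Y^{x^{\ast},u}(h))
\end{equation*}
(localised by a suitable stopping time), and then apply It\^o's formula to $s\mapsto e^{-\rho s}(\phi+g)(Y^{x^{\ast},u}(s))$. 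Such an It\^o formula is legitimate for mild solutions because $\tilde A^{\ast}D\phi\in C_b(X;X)$ by the definition of $\Phi$, and because for the radial part $g$, formula \eqref{eq:gradient_radial} combined with the dissipativity of $\tilde A$ makes the unbounded drift term $\langle\tilde A Y(s),Dg(Y(s))\rangle$ carry the correct sign. Dividing by $h$, letting $h\to 0^{+}$, and taking the infimum over $u\in U$ produces the subsolution inequality of Definition \ref{def:viscosity_solution}(i); the supersolution inequality is obtained symmetrically.

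\textbf{Uniqueness.} Here the goal is a comparison theorem in $\mathcal{S}$. Following the $B$-continuous Crandall--Ishii doubling-of-variables scheme of \cite[Chapter 3]{fgs_book}, for a subsolution $v_{1}$ and a supersolution $v_{2}$ in $\mathcal{S}$, for $\varepsilon,\delta>0$, and for some $k$ satisfying \eqref{eq:k_set_uniqueness}, I consider
\begin{equation*}
\Psi_{\varepsilon,\delta}(x,y):=v_{1}(x)-v_{2}(y)-\frac{1}{2\varepsilon}|x-y|_{-1}^{2}-\delta(1+|x|_{X}^{2k}+|y|_{X}^{2k}).
\end{equation*}
The $B$-continuity of $v_{1},v_{2}$ combined with the $|\cdot|_{X}$-coercivity of the $\delta$-penalty forces the supremum of $\Psi_{\varepsilon,\delta}$ to be attained at some $(\bar x,\bar y)\in X\times X$. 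Using the projections $P_{N},Q_{N}$ associated to $B$, I invoke the Ishii maximum principle in Hilbert spaces to produce matrices $Y,Z\in S(X)$ with $Y=P_{N}YP_{N}$, $Z=P_{N}ZP_{N}$ satisfying the matrix inequality in Lemma \ref{lemma:properties_H}(iv), together with viscosity inequalities at $\bar x$ and $\bar y$. Subtracting these inequalities and applying \eqref{eq:H_norm_-1} absorbs the $\varepsilon^{-1}|\bar x-\bar y|_{-1}^{2}$ contribution, while \eqref{eq:Hamiltonian_local_lip} combined with \eqref{eq:k_set_uniqueness} and Assumption \ref{hp:discount} absorbs the $\delta$-penalty into the $\rho v$ terms. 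Sending $N\to\infty$ (using \eqref{eq:H_lambda_BQ_N} and \eqref{eq:trace_goes_to_zero}), then $\varepsilon\to 0^{+}$, and finally $\delta\to 0^{+}$ yields $v_{1}\leq v_{2}$. Applying this with $v_{1}=V$ (and symmetrically) against any other viscosity solution in $\mathcal{S}$ gives uniqueness.

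\textbf{Main obstacle.} The delicate step is the uniqueness/comparison argument. Infinite-dimensional non--local-compactness forces one to exploit simultaneously the $B$-continuity of $v_{1},v_{2}$ (so that maximising sequences converge in the appropriate weak/$B$-sense) and the $|\cdot|_X$-coercivity of the $\delta$-penalty in order to secure existence of the maximum of $\Psi_{\varepsilon,\delta}$; the matching between the Hilbert-space Ishii lemma and the Hamiltonian requires precisely the $P_{N}$-restricted matrix formulation of \eqref{eq:H_norm_-1}. The fact that in our setting we have not only $\tilde A^{\ast}B\in\mathcal{L}(X)$ but also $\tilde A^{\ast}B^{1/2}\in\mathcal{L}(X)$ (see \eqref{eq:A*B^1/2}) makes the test-function machinery particularly favourable. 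Once all the required hypotheses have been checked in our setting — which Lemmas \ref{lemma:regularity_coefficients_theory} and \ref{lemma:properties_H} do — the abstract existence and comparison theorems of \cite[Chapter 3]{fgs_book} apply and deliver the theorem.
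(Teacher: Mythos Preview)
Your proposal is correct and follows essentially the same route as the paper: both reduce the claim to the abstract existence/comparison theory of \cite[Chapter~3]{fgs_book}, after noting that $V\in\mathcal{S}$ by Proposition~\ref{prop:growth_V} and that all the structural hypotheses on $\tilde A$, $B$, $\tilde b$, $\sigma$, $L$, and $H$ have been verified in Propositions~\ref{prop:Amaxdiss}, \ref{prop:weak_b} and Lemmas~\ref{lemma:regularity_coefficients_theory}, \ref{lemma:properties_H}. The paper's own proof is in fact a bare citation of \cite[Theorem~3.75]{fgs_book} (and the comparison result \cite[Theorem~3.56]{fgs_book}); you have simply unpacked what that theorem's proof entails (DPP, It\^o's formula for test functions, doubling of variables with the $P_N$-restricted Ishii lemma), which is accurate but more than the paper itself provides. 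One small remark: the observation that $\tilde A^\ast B^{1/2}\in\mathcal{L}(X)$ is not needed here---only the weak $B$-condition $\tilde A^\ast B\in\mathcal{L}(X)$ is used in the existence/uniqueness argument; the stronger property is exploited later in the partial-regularity proof of Section~\ref{sec:regularity}.
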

\begin{proof}
Notice that $V \in \mathcal S$ by Proposition \ref{prop:growth_V}. 

The proof of the fact that $V$ is the unique viscosity solution of the HJB equation can be found in \cite[Theorem 3.75]{fgs_book} as all assumptions of this theorem are satisfied due to Lemma \ref{lemma:properties_H}. The reader can also check
the comparison theorem \cite[Theorem 3.56]{fgs_book}.
\end{proof}
\begin{remark}
We remark that Theorem \ref{th:existence_uniqueness_viscosity_infinite} also holds in the deterministic case, i.e. when $\sigma(x,u)=0$.  (in which case we may take $\rho_0=Cm$ and $k<\rho/C$ in \eqref{eq:k_set_uniqueness}). The theory of viscosity solutions handles well degenerate HJB equations, i.e. when the Hamiltonian satisfies
$$H(x,p,Y) \leq H(x,p,Z) $$
for every $Y,Z \in S(X)$ such that $Z\leq Y$. Hence viscosity solutions can be used in connection with the dynamic programming method for optimal control of stochastic differential equations in the case of degenerate noise in the state equation, in particular, when it completely vanishes (deterministic case). This is not possible using the mild solutions approach due to its various limitations described in the introduction (for more on this, see  \cite{fgs_book}).
 \end{remark}
\section{Partial regularity}\label{sec:regularity}
In this section we prove partial regularity of $V$ with respect to the $x_0$-variable. To do this we assume the following. 
\begin{hypothesis}\label{hp:local_lipschitz_V}
For every $R>0 $ there exists $K_R>0$ such that
$$|V(y)-V(x)|\leq K_R |y-x|_{-1}, \quad \forall x,y \in X, |x|,|y|\leq R.$$
\end{hypothesis}
Assumption \ref{hp:local_lipschitz_V} is satisfied in many natural cases when the cost function  $l(\cdot,u)$ is Lipschitz continuous as we illustrate in the following example.
\begin{example}
Suppose that
 $l(\cdot,u)$ is Lipschitz continuous, uniformly in $u$, and  $\rho >C+\frac{ C^2|B|_{\mathcal{L}(X)}}{2}$, where $C$ is the constant from \eqref{eq:b_B_property} and \eqref{eq:G_lipschitz_norm_B}. Indeed,
 fix $x,y \in X$, $u(\cdot) \in \mathcal{U}$ and denote by $X(s)$, $Y(s)$ the mild solutions of the state equation with initial data $x,y$ respectively and the same control $u(\cdot)$. Then, by \cite[Lemma 3.20]{fgs_book}, we have
\begin{equation}\label{eq:E|X(s)-Y(s)|_-1}
 \mathbb{E}\left[|X(r)-Y(r)|_{-1}^{2}\right] \leq C(r)|x-y|_{-1}^{2},
\end{equation}
where, recalling that the constant of the weak $B$-condition in our case is $C_0=0$,
$$C(r)=e^{(2 C+C^2|B|_{\mathcal{L}(X)})r}.$$
Since $L(x,u)=l(x_0,u)$, the Lipschitz continuity of $l(\cdot,u)$ (uniform in $u$), \eqref{eq:|x_0|_leq_|x|},  \eqref{eq:E|X(s)-Y(s)|_-1} and $\rho > C+\frac{C^2|B|_{\mathcal{L}(X)}}{2}$ yield
\begin{small}
\begin{align*}
& \mathbb{E}  \int_0^{\infty} e^{-\rho r}|L(X(r), u(r))-L(Y(r), u(r))| d r  \leq C_1 \int_{0}^{\infty} e^{-\rho r} \mathbb{E}  |X_0(r)-Y_0(r)| d r		\\
& \leq C_1 \int_{0}^{\infty} e^{-\rho r} \mathbb{E}  |X(r)-Y(r)|_{-1} d r 
 \leq C_1 \int_{0}^{\infty} e^{-\rho r} e^{\left (C+\frac{C^2|B|_{\mathcal{L}(X)}}{2} \right)r} d r |x-y|_{-1}  \leq C_2 |x-y|_{-1}.
\end{align*}
\end{small}
and the claim easily follows.
\end{example}
\begin{remark}
Notice that, if we only assume that $l(\cdot,u)$ is locally Lipschitz continuous, uniformly in $u$, the above proof would not work and, by using an argument outlined in the proof of Proposition \ref{prop:V_continuous_norm_-1}, we would only get that $V$ is uniformly continuous with respect to the $|\cdot|_{-1}$ norm on bounded sets of $X$.
\end{remark}
Finally, we assume local uniform nondegeneracy of $\sigma_0 \colon \mathbb R^n \times L^2 \times U \to \mathbb R^n \times \mathbb R^q$ (recall the definition of $\sigma_0(x,u)$ in Section \ref{sec:infinite_dimensional_framework}).

\begin{hypothesis}\label{hp:uniform_ellipticity}
For every $R>0$  there exists $\lambda_{R} >0$ such that
$$\sigma_0(x,u) \sigma_0(x,u)^T\geq \lambda_{R} I, \quad \forall x\,\ \mbox{such that}\, \ |x|_{X} \leq R,  \ \forall u \in U.$$
\end{hypothesis}
For every $\bar x_1 \in L^2$ we define 
$$V^{\bar x_1}(x_0):=V(x_0,\bar x_1), \quad \forall x_0 \in \mathbb R^n.$$
Theorem \ref{th:C1alpha} is the main result of this section.
\begin{theorem}\label{th:C1alpha}
Let Assumptions \ref{hp:state}, \ref{hp:cost}, \ref{hp:discount}, \ref{hp:local_lipschitz_V}, and \ref{hp:uniform_ellipticity} hold. For every $p>n$ and  every fixed $\bar x_1 \in L^2$, we have $V^{\bar x_1}\in W^{2,p}_{\rm loc}(\mathbb R^n)$; thus,  by Sobolev embedding, $V^{\bar x_1}\in C^{1,\alpha}_{\rm loc}({\mathbb{R}}^n)$ for all $0<\alpha<1$.
Moreover, for every $R>0$, there exists $C_{R}>0$ such that 
$$|V^{\bar x_1}|_{W^{2,p}(B_{R})}\leq C_R, \ \ \ \ \forall \bar x_{1}\ \mbox{such that}\, \ |\bar x_1|_{L^{2}}\leq R.$$ 
Finally, $D_{x_0}V$ is continuous with respect to the $|\cdot |_{-1}$ norm on bounded sets of $X$. In particular, $D_{x_0}V$ is continuous in $X$.
\end{theorem}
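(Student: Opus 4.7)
The plan follows the finite-dimensional reduction strategy sketched in the introduction: (i) show that for each fixed $\bar x_1\in L^2$ the section $V^{\bar x_1}:=V(\cdot,\bar x_1)$ is a continuous viscosity solution on $\mathbb R^n$ of a uniformly elliptic fully nonlinear HJB; (ii) apply interior $W^{2,p}$-regularity for $L^p$-viscosity solutions of such equations; (iii) transfer the uniform estimates back to obtain continuity of $D_{x_0}V$ on $X$. The key structural observation enabling Step (i) is that $H=\tilde H$ in \eqref{eq:HJB} depends on $Dv,D^2 v$ only through the finite-dimensional components $D_{x_0}v,D^2_{x_0}v$, and that for test functions $\phi(x)=\psi(x_0)$ one has $\langle x,\tilde A^* D\phi\rangle_X=-x_0\cdot D\psi(x_0)$, which cancels exactly the $-x_0\cdot p_0$ piece inside $\tilde H$.

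To carry out Step (i), suppose $V^{\bar x_1}-\psi$ attains a strict local maximum at $x_0^*$ with $\psi\in C^2(\mathbb R^n)$. Introduce on $X$ the penalized functional
\begin{equation*}
\Psi_\epsilon(x) = V(x) - \psi(x_0) - \tfrac{1}{\epsilon}\,|x-(x_0^*,\bar x_1)|_{-1}^{2} - \delta\,g(|x|_X),
\end{equation*}
where $g\in\mathcal G$ grows sufficiently fast (e.g.\ $g_0(r)=r^{m+2}$ suitably smoothed near $0$) to guarantee, via Proposition \ref{prop:growth_V}, the existence of a maximizer $x^\epsilon=(x_0^\epsilon,x_1^\epsilon)$. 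Since $|\cdot|_{-1}^2=\langle B\cdot,\cdot\rangle_X$ with $\tilde A^*B=\tilde A^{-1}\in\mathcal L(X)$ (see \eqref{eq:A*B^1/2}), after a standard truncation this penalty fits into the $\Phi+\mathcal G$ framework. $B$-continuity of $V$ (Proposition \ref{prop:V_continuous_norm_-1}) together with the strict maximality at $x_0^*$ then force $x^\epsilon\to(x_0^*,\bar x_1)$ in $|\cdot|_{-1}$ and $\epsilon^{-1}|x^\epsilon-(x_0^*,\bar x_1)|_{-1}^2\to 0$ as $\epsilon,\delta\to 0$. Writing the viscosity subsolution inequality at $x^\epsilon$, using the cancellation above together with Lemma \ref{lemma:regularity_coefficients_theory} (in particular, the $|\cdot|_{-1}$-continuity of $\bar x_1\mapsto\int_{-d}^0 a_i(\xi)\bar x_1(\xi)d\xi$, which uses $a_i(-d)=0$), and passing to the limit yields
\begin{equation*}
\rho V^{\bar x_1}(x_0^*)+\sup_{u\in U}\Big\{-b_0\big(x_0^*,\textstyle\int_{-d}^0 a_1\bar x_1\,d\xi,u\big)\!\cdot\! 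D\psi(x_0^*)-\tfrac12\tr\big[\sigma_0\sigma_0^T(x_0^*,\textstyle\int_{-d}^0 a_2\bar x_1\,d\xi,u) D^2\psi(x_0^*)\big]-l(x_0^*,u)\Big\}\leq 0.
\end{equation*}
The supersolution case is symmetric; hence $V^{\bar x_1}$ is a continuous viscosity solution of a finite-dimensional fully nonlinear HJB, uniformly elliptic on bounded sets by Assumption \ref{hp:uniform_ellipticity} with constants depending only on $R\geq|\bar x_1|_{L^2}\vee|x_0|$.

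For Step (ii), Assumption \ref{hp:local_lipschitz_V} gives $|V^{\bar x_1}|_{L^\infty(B_R^0)}\leq C_R$ uniformly in $|\bar x_1|_{L^2}\leq R$. Interior $W^{2,p}$-regularity for $L^p$-viscosity solutions of uniformly elliptic fully nonlinear equations with continuous coefficients (Caffarelli; \cite{caffarelli_c_s, swiech1997, swiech2020}) then yields, for every $p<\infty$, $V^{\bar x_1}\in W^{2,p}_{\rm loc}(\mathbb R^n)$ with $|V^{\bar x_1}|_{W^{2,p}(B_R^0)}\leq C_R'$ uniform in $|\bar x_1|_{L^2}\leq R$. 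Choosing $p>n$ and invoking Sobolev embedding gives $V^{\bar x_1}\in C^{1,\alpha}_{\rm loc}(\mathbb R^n)$ for every $\alpha\in(0,1)$.

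For Step (iii), let $x^n\to x$ in $|\cdot|_{-1}$ within $B_R$. Then $x_0^n\to x_0$ by \eqref{eq:|x_0|_leq_|x|}. The uniform $W^{2,p}$-bound from Step (ii) implies equicontinuity of $\{V^{x_1^n}, DV^{x_1^n}\}$ on compact sets of $\mathbb R^n$, so along a subsequence $V^{x_1^n}\to w$ and $DV^{x_1^n}\to Dw$ locally uniformly. Since $|\cdot|_{-1}$-convergence on $B_R$ combined with the compactness of $B$ implies both weak $X$-convergence $x^n\rightharpoonup x$ and $Bx^n\to Bx$, the $B$-continuity of $V$ gives the pointwise identification $w=V^{x_1}$; by continuity of $DV^{x_1}$ (Step (ii)), $DV^{x_1^n}(x_0^n)\to DV^{x_1}(x_0)$ along the full sequence, establishing $|\cdot|_{-1}$-continuity on bounded sets and hence $X$-continuity of $D_{x_0}V$. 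The main obstacle lies in Step (i): constructing admissible infinite-dimensional test functions that penalize the $x_1$-direction towards $\bar x_1$, since the $L^2$-quadratic penalty $|x_1-\bar x_1|_{L^2}^2$ is not $\tilde A^*$-admissible in $\Phi$ in general, forcing the use of $|\cdot|_{-1}^2$-penalties together with careful truncation and a delicate control of the cross-term $\epsilon^{-1}\langle x^\epsilon,\tilde A^* B(x^\epsilon-(x_0^*,\bar x_1))\rangle_X$ in the viscosity inequality as $\epsilon\to 0$.
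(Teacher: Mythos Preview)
Your overall architecture (penalize in the $|\cdot|_{-1}$-norm to localize at $\bar x_1$, reduce to a finite-dimensional uniformly elliptic equation, apply interior $W^{2,p}$-regularity, then conclude) is the same as the paper's, and your Step~(iii) compactness argument is a legitimate alternative to the paper's Step~4 contradiction argument. However, Step~(i) as written contains a real gap, and fixing it forces a change in the target equation and hence in Step~(ii).

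\medskip
\textbf{The gap in Step~(i).} You claim that the penalization yields the \emph{exact} finite-dimensional HJB inequality for $V^{\bar x_1}$. For this you would need the cross-term
\[
\frac{2}{\epsilon}\,\big\langle x^\epsilon,\tilde A^{*}B\big(x^\epsilon-(x_0^{*},\bar x_1)\big)\big\rangle_X
\]
to vanish as $\epsilon\to 0$. Your argument invokes only $B$-continuity of $V$ (Proposition~\ref{prop:V_continuous_norm_-1}), which produces at best $\epsilon^{-1}|x^\epsilon-(x_0^{*},\bar x_1)|_{-1}^{2}\to 0$, i.e.\ $|x^\epsilon-(x_0^{*},\bar x_1)|_{-1}=o(\sqrt{\epsilon})$. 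Using $\tilde A^{*}B^{1/2}\in\mathcal L(X)$, the cross-term is bounded by a multiple of $\epsilon^{-1}|x^\epsilon-(x_0^{*},\bar x_1)|_{-1}$, which may blow up like $\epsilon^{-1/2}$. The paper uses Assumption~\ref{hp:local_lipschitz_V} here (not merely $B$-continuity) to obtain the \emph{rate} $|(0,\hat x_1^{\varepsilon}-\bar x_1)|_{-1}\le K_{4R}\varepsilon$; this makes the cross-term \emph{bounded}, but still not vanishing. Consequently, even with the correct tool, you cannot obtain the exact HJB for $V^{\bar x_1}$: an irreducible constant $\bar C_R$ remains on the right-hand side (coming from $\rho V$, the cross-term, and the radial test-function contributions).

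\medskip
\textbf{How the paper proceeds instead.} Accepting this residual, the paper further uses the Lipschitz bound $|D_{x_0}\varphi(\bar x_0)|\le \bar C_R$ (again from Assumption~\ref{hp:local_lipschitz_V}) to absorb \emph{all} zeroth- and first-order terms, arriving only at
\[
\sup_{u\in U}\Big[-\tfrac12\operatorname{Tr}\big(\sigma_0((x_0,\bar x_1),u)\sigma_0((x_0,\bar x_1),u)^{T}D^{2}v(x_0)\big)\Big]\ \lessgtr\ \pm\,\bar C_R
\]
in $B_R^{0}$. This is not a single equation, so one cannot directly invoke interior $W^{2,p}$-estimates as in your Step~(ii). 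The paper's Step~3 closes this with $L^{p}$-viscosity theory: $V^{\bar x_1}$ is then twice pointwise differentiable a.e., the two inequalities hold a.e., one \emph{defines} $f(x_0)$ by plugging $D^{2}V^{\bar x_1}(x_0)$ into the Bellman operator, notes $|f|_{L^{\infty}}\le \bar C_R$, shows $V^{\bar x_1}$ is an $L^{p}$-viscosity solution of the single equation with right-hand side $f$, and only then applies interior $W^{2,p}$-regularity. Your Step~(ii), as stated, presupposes the exact HJB from Step~(i) and therefore does not go through without this detour.
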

Before presenting the proof, we explain its basic idea. Since the range of  $\sigma(x,u)$ is finite dimensional (the $x_1$-component of $\sigma(x,u) w$ for $w \in \mathbb R^q$ is $0$), we first show that the function $V^{\bar x_1}$, defined on a finite dimensional space, is a viscosity subsolution (respectively, supersolution) of a finite-dimensional equation \eqref{eq:proof_V_x1_subsolution} (respectively,   \eqref{eq:proof_V_x1_supersolution}) in $\mathbb R^n$. This part contains the most technical difficulties. Once this is done, we can then apply the theory of $L^p$-viscosity solutions (e.g., \cite{swiech2020}) to obtain that $V^{\bar x_1}$ is an $L^p$-viscosity solution of a linear, uniformly elliptic PDE equation in $\mathbb R^n$ with a locally bounded right hand side function. Then, the regularity theory for uniformly elliptic equations  yields  $V^{\bar x_1}\in W^{2,p}_{\rm loc}(\mathbb R^n)$ for every $p>n$.
\begin{proof}
We organize the proof in several steps.

\underline{\emph{Step 1.}}
Fix $\bar x_1 \in L^2$ and let $R \geq 1$ be  such that $|\bar x_1|_{L^2}\leq R$. Let $\bar x_0 \in \mathbb R^n$ be such that $|\bar x_0|\leq R$ and $\varphi\in C^2({\mathbb{R}^n})$ be such that $ V^{\bar x_1}-\varphi$ has a strict global maximum at $\bar x_0$. We assume without loss of generality that the maximum is equal to $0$ and that $\varphi>0$ if $|x_0|>4R$. We extend $\varphi$ to $X$ by defining $\tilde \varphi(x_0,x_1):= \varphi(x_0)$. With an abuse of notation we will still write $\varphi(x_{0})$ for $\tilde{\varphi}(x)$ and 
$D \varphi(x_0)=(D_{x_0}\varphi(x_0),0)$. Note that  $D \varphi(x_0) \in D(\tilde A^*)$ for all $x \in X$.
   Set $\bar x:=(\bar x_0,\bar x_1)$.
 We consider, for $\varepsilon>0$, the functions
\[
\Phi_\varepsilon(x)=V(x)-\varphi(x_0)-\frac{1}{\varepsilon}|x-(x_0,\bar x_1)|_{-1}^2-M(|x|_{X}-2R)_+^{m+4}, \quad x=(x_0,x_1)\in X,
\]
where $M$ is chosen so that $V(x)<\frac{M}{2}(|x|_{X}-2R)_+^{m+4}$ if $|x|_{X}>4R$. Then
\begin{align}\label{eq:proof_growth_Phi_m}
\Phi_\varepsilon(x)<-\frac{M}{2}(|x|_{X}-2R)_+^{m+4},\quad \forall x\in X, |x|_{X}>4R.
\end{align}
Observe that $\Phi_\varepsilon$ is weakly sequentially upper semicontinuous as $V$ is weakly sequentially continuous by Proposition \ref{prop:V_continuous_norm_-1}, the functions $x\mapsto \varphi(x_0)$ and $x\mapsto |x|_{-1}^2$ are weakly sequentially continuous, and 
the function $x\mapsto |x|_{X}$ is weakly sequentially lowersemicontinuous.

We distinguish two cases: (i) $\sup_{X}\Phi_\varepsilon>0$ for every $\varepsilon>0$; (ii)  $\Phi_\varepsilon \leq 0$ for some (small) $\varepsilon$.

\emph{Case (i)}
 Recall that $V^{\bar x_1}(\bar x_0)-\varphi(\bar x_0)=0$ and $|\bar x|_{X}\leq \sqrt{2}R$. Then
$
\Phi_\varepsilon
$
has global maximum at some $\hat x^{\varepsilon} \in X$, with $|\hat x^{\varepsilon}|_{X}\leq 4R$ and $\Phi_\varepsilon(\hat x^{\varepsilon})>0$. Recalling  Assumption \ref{hp:local_lipschitz_V} and since $V^{\bar x_1}-\varphi$ has a strict global maximum at $\bar x_0$ equal to $0$, we have
\[
\begin{split}
0 < \Phi_\varepsilon(\hat x^{\varepsilon})&
\leq 
V(\hat x^{\varepsilon})-\varphi(\hat x_0^{\varepsilon})-\frac 1 \varepsilon|\hat x-(\hat x_0^{\varepsilon},\bar x_1)|_{-1}^2
\\
&= V(\hat x^{\varepsilon})-V(\hat x_0^{\varepsilon},\bar x_1)+V^{{\bar x}_{1}}(\hat x_0^{\varepsilon})-\varphi(\hat x_0^{\varepsilon})-\frac 1 \varepsilon |\hat x^{\varepsilon}-(\hat x_0^{\varepsilon},\bar x_1)|_{-1}^2
\\
&
\leq
K_{4R}|\hat x^{\varepsilon}-(\hat x_0^{\varepsilon},\bar x_1)|_{-1}-\frac 1 \varepsilon |(0,\hat x_1^{\varepsilon}-\bar x_1)|_{-1}^2=K_{4R}|(0,\hat x_1^{\varepsilon}-\bar x_1)|_{-1}-\frac 1 \varepsilon|(0,\hat x_1^{\varepsilon}-\bar x_1)|_{-1}^2.
\end{split}
\]
It thus follows that
\begin{align}\label{eq:proof_bound_hat_x_1-bar_x_1}
|(0,\hat x_1^{\varepsilon}-\bar x_1)|_{-1}\leq K_{4R}\varepsilon.
\end{align}
On the other hand, by the fact that $\sup_{X} \Phi_\varepsilon >0$, we have
\begin{align*}
V^{\bar x_1}(\bar x_0)-\varphi(\bar x_0)& =\sup_{\R^{n}} (V^{\bar x_1}-\varphi)=0 < \sup_{X} \Phi_\varepsilon= \Phi_\varepsilon(\hat x^{\varepsilon})\\
& \leq V(\hat x^{\varepsilon})-\varphi(\hat x_0^{\varepsilon})= V(\hat x^{\varepsilon})-V^{\bar x_1}(\hat x_0^{\varepsilon})+V^{\bar x_1}(\hat x_0^{\varepsilon})-\varphi(\hat x_0^{\varepsilon}).
\end{align*}
Now,  taking the $\liminf_{\varepsilon \to 0}$ above, by \eqref{eq:proof_bound_hat_x_1-bar_x_1} and Assumption \ref{hp:local_lipschitz_V}, we obtain
  $$V^{\bar x_1}(\bar x_0)-\varphi(\bar x_0) \leq \liminf_{\varepsilon \to 0} (V^{\bar x_1}(\hat x_0^{\varepsilon})-\varphi(\hat x_0^{\varepsilon})).
  $$ 
Since $V^{\bar x_1}-\varphi$ has a strict global maximum at $\bar x_0$ we thus must have 
  \begin{align}\label{eq:proof_convergence_hat_x_0-bar_x_0}
\lim_{\varepsilon \to 0}|\hat x_0^{\varepsilon}-\bar x_0|=0.
\end{align}
In particular, from \eqref{eq:proof_bound_hat_x_1-bar_x_1} and \eqref{eq:proof_convergence_hat_x_0-bar_x_0} it now follows that
\begin{align}\label{eq:proof_regularity_convergence_hat_x-bar_x_-1}
\lim_{\varepsilon \to 0}|\hat x^\varepsilon-\bar x|_{-1}=0.
\end{align}

\emph{Case (ii)} In this case $\Phi_\varepsilon$ has a maximum at $\bar x$ since $\Phi_\varepsilon(\bar x)=0$, so we easily get \eqref{eq:proof_regularity_convergence_hat_x-bar_x_-1} with $\hat{x}^\varepsilon=\bar x$.

\medskip
\underline{\emph{Step 2.}}
Define $\psi=\phi+g$, where
\begin{align*}
  \phi(x):=\varphi(x_0)+\frac 1 \varepsilon |x-(x_0,\bar x_1)|_{-1}^2, \ \ \  \ \ \ \ 
g(x):=M(|x|-2R)_+^{m+4}.
\end{align*} 
With these definitions we have $$V-\phi-g=\Phi_{\varepsilon},$$
so that $V-\phi-g$ has a global maximum at $\hat x^\varepsilon$. 
Moreover,
\begin{align*}
&  D\phi(x)=D\varphi(x_0)+ \frac 2 \varepsilon B(x-(x_0,\bar x_1)),\\
&D_{x_0}\phi(x)=D_{x_0}\varphi(x_0),
\end{align*} 
and
\begin{align}\label{eq:proof_growth_Dg,D^2g}
|D^2g(x)|+|Dg(x)|\leq \bar C_R, \quad \forall  |x|\leq 4R.
\end{align}
Notice that, since $D\varphi(x_0) \in D(\tilde A^*)$, we have $\phi\in\Phi$, i.e., it is  is a regular test function  according to Definition \ref{def:test_functions}. Moreover, clearly  $g\in\mathcal{G}$, i.e., it  is a radial test function according to Definition \ref{def:test_functions}.
We will write $\bar C_R$ to denote a generic constant, possibly changing from line to line, depending on $R$ and the data of the problem, which is independent of $\varepsilon$, $\varphi$ and on ${x} \in B_R$. Then, since $V$ is a viscosity subsolution to \eqref{eq:HJB}, we have
\[
\begin{split}
\rho V(\hat x^\varepsilon)&  -\langle  \hat x^\varepsilon,\tilde{A}^*(D\varphi(\hat x_0^\varepsilon)+\frac{1}{\varepsilon}B(0,\hat x_1^\varepsilon-\bar x_1))\rangle +H(\hat x^\varepsilon,D\psi(\hat x^\varepsilon), D^2\psi(\hat x^\varepsilon))\leq 0.
\end{split}
\]
By \eqref{eq:A*B^1/2} and \eqref{eq:proof_bound_hat_x_1-bar_x_1}, we have 
$$\frac 1 \varepsilon|\tilde{A}^*B(0,\hat x_1^\varepsilon-\bar x_1))|=\frac 1 \varepsilon |\tilde{A}^*B^{1/2}B^{1/2}(0,\hat x_1^\varepsilon-\bar x_1))|\leq  |\tilde{A}^*B^{1/2}|_{\mathcal{L}(X)}\frac 1 \varepsilon|(0,\hat x_1^\varepsilon-\bar x_1))|_{-1}\leq \bar C_R.$$  
The latter two inequalities, Proposition \ref{prop:growth_V}, \eqref{eq:Hamiltonian_local_lip}, \eqref{eq:proof_growth_Dg,D^2g}, the fact that $|\hat x^\varepsilon|_X\leq 4R$, and the definition of $\tilde H$ imply
\[
-\langle  \hat x^\varepsilon,\tilde{A}^*D\varphi(\hat x_0^\varepsilon)\rangle+\tilde H(\hat x^\varepsilon,D_{x_0}\varphi(\hat x_0^\varepsilon), D_{x_0^2}^2\varphi(\hat x_0^\varepsilon)) \leq \bar C_R.
\]
Recalling \eqref{eq:proof_convergence_hat_x_0-bar_x_0}, we have
\[
\lim_{\varepsilon \to 0}\tilde{A}^*D\varphi(\hat x_0^\varepsilon)= \tilde{A}^*D\varphi(\bar x_0).
\]
Hence, letting $\varepsilon \to 0$ in the previous inequality and using \eqref{eq:proof_regularity_convergence_hat_x-bar_x_-1} and the continuity of $\tilde H$, we obtain 
\[
-\langle  \bar x,\tilde{A}^*D\varphi(\bar x_0)\rangle+\tilde H(\bar x,D_{x_0}\varphi(\bar x), D^2_{x_0^2}\varphi(\bar x)) \leq \bar C_R.
\]
Now, since $V^{\bar x_1}-\varphi$ has a strict global maximum at $\bar x_0$  and by Assumption \ref{hp:local_lipschitz_V} the function $V^{\bar x_1}$ is Lipschitz continuous on ${\mathbb{R}^n}$, with Lipschitz constant $\bar C_R$ independent of $\bar x_1$, we have 
$|D_{x_0}\varphi(\bar x_0)| \leq \bar C_R$.  Then, as $D\varphi(\bar x_0) \in D(\tilde A^*)$ we have $|\tilde A^*D\varphi(\bar x_0)|_X=|-(D_{x_0}\varphi(\bar x_0) ,0) |_X \leq \bar C_R$.

Therefore, by \eqref{eq:Hamiltonian_local_lip}, $|\bar x|\leq \sqrt{2}R$ and the definition of $\tilde H$, we have  
\begin{align*}
\tilde H(\bar x,0,D_{x_0^2}^2\varphi(\bar x_0)) - \bar C_R \leq  \tilde H(\bar x,D_{x_0}\varphi(\bar x_0),D^2_{x_0^2}\varphi(\bar x_0)),
\end{align*}
so that we obtain
\begin{align*}
\tilde H(\bar x,0,D_{x_0^2}^2\varphi(\bar x_0))
\leq \bar C_R, 
\end{align*}
i.e., 
\begin{align}\label{eq:aaaa2}
\sup_{u \in U} \left [-\frac{1}{2}{\rm Tr}(\sigma_0(\bar x,u)\sigma_0(\bar x,u)^TD_{x_0^2}^2\varphi(\bar x_0)) \right]
\leq \bar C_R
\end{align}
for some constant $\bar C_R>0$ independent of $\varphi$ and $\bar x$ if $|\bar x_0|\leq R, |\bar x_1|\leq R$.
 Thus, for every $\bar x_1$ with $|\bar x_1|\leq R$, the function $V^{\bar x_1}$ is a viscosity subsolution of the finite dimensional equation
\begin{align}\label{eq:proof_V_x1_subsolution}
\sup_{u \in U} \left [-\frac{1}{2}{\rm Tr}(\sigma_0((x_0,\bar x_1),u)\sigma_0((x_0,\bar x_1),u)^TD^2v(x_{0})) \right]
=\bar C_R, \quad \mbox{in}\,\,\{|x_0| < R\}.
\end{align}
Similarly, we prove that  for every $|\bar x_1|\leq R$, the function $V^{\bar x_1}$ is a viscosity supersolution of
\begin{align}\label{eq:proof_V_x1_supersolution}
\sup_{u \in U} \left [-\frac{1}{2}{\rm Tr}(\sigma_0((x_0,\bar x_1),u)\sigma_0((x_0,\bar x_1),u)^TD^2v(x_{0})) \right]
=-\bar C_R, \quad \mbox{in}\,\,\{|x_0| < R\}.
\end{align}
\underline{\emph{Step 3.}} We now employ 
the theory of $L^p$-viscosity solutions. Since the readers may not be familiar with it, we will proceed slowly. Hypothesis \ref{hp:uniform_ellipticity} guarantees that the Bellman operator appearing in \eqref{eq:proof_V_x1_subsolution} and \eqref{eq:proof_V_x1_supersolution} is uniformly elliptic in $\{|x_0| < R\}$. Thus, by \cite[Proposition 2.9]{caffarelli_c_s}, for every $p>n$ the function $V^{\bar x_1}$ is an  $L^p$-viscosity   subsolution of \eqref{eq:proof_V_x1_subsolution} and an $L^p$-viscosity supersolution of \eqref{eq:proof_V_x1_supersolution}. 
Now, by \cite[Proposition 3.5]{caffarelli_c_s}, the function $V^{\bar x_1}$ is twice pointwise differentiable a.e. in $\{|x_0| < R\}$ and by \cite[Proposition 3.4]{caffarelli_c_s} we have that \eqref{eq:proof_V_x1_subsolution} and \eqref{eq:proof_V_x1_supersolution} are satisfied pointwise a.e.
Then, defining the function
$$f(x_0):=\sup_{u \in U} \left [-\frac{1}{2}{\rm Tr}(\sigma_0((x_0,\bar x_1),u)\sigma_0((x_0,\bar x_1),u)^TD^2V^{\bar x_1}(x_0)) \right],$$
we have $|f|_{L^\infty(B_{R})}\leq \bar C_R$ (measurability of $f$ is explained for instance in \cite{swiech2020}).
We can then apply \cite[Corollary 3]{swiech2020} (first such result was stated for $L^p$-viscosity solutions in \cite{swiech1997}) to get that for every $p>n$, the function $V^{\bar x_1}$ is an $L^p$-viscosity solution of
\begin{align}\label{eq:proof_V_x1_L^p_solution_f}
\sup_{u \in U} \left [ -\frac{1}{2}{\rm Tr}(\sigma_0((x_0,\bar x_1),u)\sigma_0((x_0,\bar x_1),u)^T D^2v(x_{0}) \right]
=f(x_0),\quad \mbox{in}\,\,\{|x_0| < R\}.
\end{align}
We now conclude by standard elliptic regularity (see e.g. \cite[Theorem 7.1]{CC}, together with Remark 1 there, or  \cite[Theorem 3.1]{swiech1997}) that $V^{\bar x_1}\in W^{2,p}_{\rm loc}(B_R)$ and $|V^{\bar x_1}|_{W^{2,p}(B_{R/2})}\leq C_R$ for some constant $C_R$. Thus in particular, by Sobolev embeddings, $V^{\bar x_1}\in C^{1,\alpha}_{\rm loc}(B_R)$ for all $0<\alpha<1$.

\underline{\emph{Step 4.}}
We now prove that $D_{x_0}V$ is continuous in $|\cdot|_{-1}$ norm on bounded sets of $X$.\\
Let $|x|_{X}\leq R$ and assume without loss of generality that $x=(0,x_1)$ and $V(0,x_1)=0,D_{x_0}V(0,x_1)=0$. Suppose by contradiction that there is $\varepsilon>0$ and a sequence 
 $x^N=(x^N_0,x^N_1)\to x=(0,x_1)$ in $|\cdot|_{-1}$ norm such that $(x^N_0,x^N_1)\in B_{2R}$ and $|(D_{x_0}V(x^N_0,x^N_1),0)|_{-1}\geq \varepsilon$. We remind that the $|(\cdot,0)|_{-1}$ and the standard norm in $\mathbb R^n$ are equivalent. Let $K_R$ be from Assumption \ref{hp:local_lipschitz_V}. Observe that, since $V^{ x_1}\in C^{1,\alpha}_{\rm loc}({\mathbb{R}}^n)$, for every $y_0 \in \mathbb R^n$ such that 
\begin{equation}\label{eq:proof_norm_(y_0,0)_-1}
|(y_0,0)|_{-1}\leq(K_{3R}|(0,x^N_1-x_1)|_{-1})^{\frac{1}{1+\alpha}}+2|(x_0^N,0)|_{-1},
\end{equation}
we have
 \begin{align*}
 |V(y_0,x_1)|=|V(y_0,x_1)-V(0,x_1)|\leq C |(y_0,0)|_{-1}^{1+\alpha}\leq C(K_{3R}|(0,x^N_1-x_1)|_{-1}+|(x_0^N,0)|_{-1}^{1+\alpha}).
 \end{align*}
Then 
 \begin{align}\label{eq:proof_growth_V(y_0,x_1N)}
|V(y_0,x_1^N)|\leq |V(y_0,x_1)|+K_{3R}|(0,x^N_1-x_1)|_{-1}\leq C(K_{3R}|(0,x^N_1-x_1)|_{-1}+|(x_0^N,0)|_{-1}^{1+\alpha}).
 \end{align}
Now observe that by taking
\[
y_0=x_0^N+\frac{D_{x_0}V(x^N_0,x^N_1)}{|(D_{x_0}V(x^N_0,x^N_1),0)|_{-1}}((K_{3R}|(0,x^N_1-x_1)|_{-1})^{\frac{1}{1+\alpha}}+|(x_0^N,0)|_{-1})
\]
 we have \eqref{eq:proof_norm_(y_0,0)_-1} so that \eqref{eq:proof_growth_V(y_0,x_1N)} holds for $|V(y_0,x_1^N)|$.
 Moreover note that also $x_0^N$ satisfies \eqref{eq:proof_norm_(y_0,0)_-1} so that  we have \eqref{eq:proof_growth_V(y_0,x_1N)} for $|V(x_0^N,x_1^N)|$.
Now, since $V^{ x_1^N}\in C^{1,\alpha}_{\rm loc}({\mathbb{R}}^n)$, we have 
\[
\begin{split}
V(y_0,x_1^N)&\geq V(x_0^N,x_1^N)+D_{x_0}V(x^N_0,x^N_1) \cdot (y_0-x_0^N)-C|y_0-x_0^N|^{1+\alpha}\geq V(x_0^N,x_1^N)
\\
&+\frac{1}{|(D_{x_0}V(x^N_0,x^N_1),0)|_{-1}}|D_{x_0}V(x^N_0,x^N_1)|^2 (K_{3R}|(0,x^N_1-x_1)|_{-1})^{\frac{1}{1+\alpha}}+|(x_0^N,0)|_{-1})
\\
&-C((K_{3R}|(0,x^N_1-x_1)|_{-1})^{\frac{1}{1+\alpha}}+|(x_0^N,0)|_{-1})^{1+\alpha}.
\end{split}
\]
Therefore, using $|(D_{x_0}V(x^N_0,x^N_1),0)|_{-1}\geq \varepsilon$, the fact that \eqref{eq:proof_growth_V(y_0,x_1N)} holds for $|V(x_0^N,x_1^N)|$, and since $|(\cdot,0)|_{-1}$ is an equivalent norm in $\mathbb R^n$, we obtain
\begin{align*}
V(y_0,x_1^N) \geq \eta\varepsilon((K_{3R}|x^N_1-x_1|_{-1})^{\frac{1}{1+\alpha}}+|(x_0^N,0)|_{-1})-C(K_{3R}|(0,x^N_1-x_1)|_{-1}+|(x_0^N,0)|_{-1}^{1+\alpha}),
\end{align*}
where $\eta>0$ is a constant.
Finally, since \eqref{eq:proof_growth_V(y_0,x_1N)} holds for $|V(y_0,x_1^N)|$, we have
\[
\eta\varepsilon((K_{3R}|(0,x^N_1-x_1)|_{-1})^{\frac{1}{1+\alpha}}+|(x_0^N,0)|_{-1})\leq C(K_{3R}|(0,x^N_1-x_1)|_{-1}+|(x_0^N,0)|_{-1}^{1+\alpha})
\]
which is impossible for large $N$ as $\alpha>0$. This concludes the proof of the theorem.
\end{proof}
The regularity result interesting on its own. It possibly can also be used to define an optimal feedback map under some natural assumptions.
 
 Assume that $U$ is compact and that 
 $\sigma_0$ does not depend on $u$.
The Hamiltonian then has the form
\begin{small}
\begin{align*}
H(x,Dv(x),D^2v(x)) & = H(x,Dv(x)) - x_0 \cdot D_{x_0}v(x)   -  {1\over 2} \tr \left [ \sigma_0 \left ( x\right) \sigma_0 \left ( x \right)^T  D^2_{x^2_0}v(x) \right]
\nonumber \\
 &=\tilde H \left (x,D_{x_0}v(x) \right) - x_0 \cdot D_{x_0}v(x) \nonumber\\
 &  -  {1\over 2} \tr \left [ \sigma_0 \left ( x_0,\int_{-d}^0 a_2(\xi)x_1(\xi)\,d\xi \right)\sigma_0 \left ( x_0,\int_{-d}^0 a_2(\xi)x_1(\xi)\,d\xi\right)^T  D^2_{x^2_0}v(x)  \right],
\end{align*}
\end{small}
where 
\begin{small}
$$ H(x,Dv(x))=\tilde H \left (x,D_{x_0}v(x) \right)= \max_{u\in U} \Bigg \{ - b_0\left ( x_0,\int_{-d}^0 a_1(\xi)x_1(\xi)\,d\xi ,u \right) \cdot D_{x_0}v(x) 
 - l(x_0,u)  \Bigg \}.$$
 \end{small}
By Theorem \ref{th:C1alpha} we can define a candidate optimal feedback map, i.e. 
 $$u^*(x)\in \mbox{argmax}_{u\in U} \Bigg \{ - b_0\left ( x_{0},\int_{-d}^0 a_1(\xi)x_1(\xi)\,d\xi ,u \right) \cdot D_{x_0}V(x) 
 - l(x,u)  \Bigg \}. $$
To show that this may lead to the existence of an optimal feedback control is a difficult problem, which passes through a verification theorem with only partial regularity and the study of the closed loop equation (see, e.g. \cite{fgs_book, federico_gozzi_2018} in the context of the approach via mild solutions or \cite{goldys_2} in the case of optimal control of deterministic delay equations). It will likely require additional assumptions. We plan to investigate this in a future work.

\section{Applications}\label{sec:merton}
In this section we provide two examples of possible applications of our approach. The first arises in finance, the second in marketing.
\subsection{Merton-like problem with path dependent coefficients}
We consider a financial market composed by a risk-free asset (bond) $B$ and a risky asset (stock) $S$. The respective dynamics (deterministic for the bond, stochastic for the stock) are given by  
\begin{align*}
 & db(t)=rb(t) dt, \\
& ds(t)=\mu \left (\int_{-d}^0 a_1(\xi) s(t+\xi)  d \xi \right) s(t)dt + v \left (\int_{-d}^0 a_2(\xi) s(t+\xi)  d \xi \right) s(t)dW(t),
\end{align*}
with initial data $s(0)=s_{0}>0, s(\xi)=s_1(\xi)>0$ for every $\xi \in [-d,0]$, $b(0)=1$, and $W$ is a real-valued  Brownian motion. Moreover, 
\begin{itemize}
\item[(i)] $r\geq 0$;
\item[(ii)] $a_1,a_{2}$ and are given deterministic functions satisfying the assumptions used in the previous sections;
\item[(iii)] $\mu,v:\R\to\R$ are given Lipschitz continuous functions. 
\end{itemize}
The investor chooses a consumption-investment strategy by deciding  at time $t\geq 0$ the fraction  $u(t) \in [0,1]$ of the portfolio $z(t)$ to be invested in the risky stock $S$; the remaining part $1-u(t)$ is then invested in the bond $B$ (self-financed portfolio with no borrowing and no short selling constraints); 
The dynamic of the portfolio (wealth) $z(t)$ is then
\begin{align*}
 &dz(t) =\frac{ds(t)}{s(t)}u(t)z(t)+\frac{db(t)}{b(t)}(1-u(t))z(t)   dt\\
&=\left[rz(t) + \left[\mu \left (\int_{-d}^0 a_1(\xi) s(t+\xi)d\xi \right) -r\right]u(t)z(t)\right]dt +  \nu \left (\int_{-d}^0 a_2(\xi) s(t+\xi) d \xi \right) u(t) z(t) dW(t),
\end{align*}
with $z(0)=z_0$, where $z_{0}>0$ is the initial value of the portfolio. The stochastic process $u(\cdot)$ is the control process. We use the same setup of the stochastic optimal control problem as the one in Section \ref{sec:formul}. The control set $U$ is now 
\[
U= [0,1] .
\]  
The state equation
 of the optimal control problem can be seen as a controlled SDDE in $\R^{2}$ for the couple $y(t)=(s(t),z(t))$ in the form \eqref{eq:SDDE}, 
where, for every $x_0=(s_0,z_0) \in \mathbb{R}^2,\ (s_1,z_1) \in L^2,\ u \in U,$ 

\begin{align*}
b_0 \left ( x_0,\int_{-d}^0 a_1(\xi)x_1(\xi)\,d\xi ,u \right)=\begin{bmatrix}
&  \mu \left (\int_{-d}^0 a_1(\xi)s_1(\xi) d \xi \right) s_0 \\
& r z_0 + \left[ \mu \left (\int_{-d}^0 a_1(\xi)s_1(\xi) d \xi \right)  -r \right]u z_0,  
 \end{bmatrix},
\end{align*}

\begin{align*}
\sigma_0 \left (x_0,\int_{-d}^0 a_2(\xi)x_1(\xi)\,d\xi ,\, u \right)w=\begin{bmatrix}
&  \nu \left (\int_{-d}^0 a_2(\xi)s_1(\xi) d \xi \right) s_0 w\\
&  \nu \left (\int_{-d}^0 a_2(\xi)s_1(\xi) d \xi \right) u z_0 w
 \end{bmatrix}. 
\end{align*}
The
goal of the investor is to solve the following optimization problem:
\begin{equation*}
\sup_{u(\cdot) \in \mathcal{U}} \mathbb E\left[ \int_0^{\infty} e^{-\rho t}g(z(t))dt \right]
\end{equation*}
for some concave utility function $g \colon \mathbb R \rightarrow \mathbb R$, where $\rho>0$ is a discount factor. The optimization of this kind of functionals arises in mathematical finance, for example in the context of portfolio optimization with random horizon  (see e.g. \cite[Section 6.1]{federico_gassiat_gozzi_2015}) or in the context of pension fund management  (see e.g. \cite{dizacinto_federico_gozzi_2011, federico_2011}).

Note that the maximization problem is equivalent to 
\begin{equation*}
\inf_{u(\cdot) \in \mathcal{U}} \mathbb E\left[ \int_0^{\infty} e^{-\rho t}l(z(t))dt \right],
\end{equation*}
where $l(z)=-g(z)$. 
By considering the infinite dimensional framework of Section \ref{sec:infinite_dimensional_framework}, if $l$ (or equivalently $g$) satisfies Assumption \ref{hp:cost}, we can use Theorem \ref{th:existence_uniqueness_viscosity_infinite} to characterize the value function $V$ as the unique viscosity solution to \eqref{eq:HJB}.

\subsection{Optimal advertising with delays}
The following problem is taken from \cite{gozzi_marinelli_2004}. In the spirit of the model in \cite[Section 4]{gozzi_marinelli_2004} we assume that no delay in the control is present.
The model for the dynamics of the stock of advertising goodwill $y(s)$ of the product is given by the following controlled SDDE
\begin{equation*}
\begin{cases}
dy(t) = \left[ a_0 y(t)+\int_{-d}^0 a_1(\xi)y(t+\xi)\,d\xi +c_0 u(t) \right]
         dt   + \sigma_0 \, dW(t),\\
y(0)=x_0, \quad y(\xi)=x_1(\xi)\; \quad \forall \xi\in[-d,0),
\end{cases}
\end{equation*}
where $d>0$, the control process $u(s)$ models the intensity of advertising spending and $W$ is a real-valued Brownian motion.
\begin{enumerate}[(i)]
\item $a_0 \leq 0$ is a constant factor of image deterioration in absence of advertising;
\item   $c_0 \geq 0$ is a constant advertising effectiveness factor;
\item $a_1 \leq 0$ is a given deterministic function satisfying the assumptions used in the previous sections which represents the distribution of the forgetting time; 
\item $\sigma_0>0$ represents  the uncertainty in the model;
\item $x_0 \in \mathbb R$ is the level of goodwill at the beginning of the advertising campaign;
\item $x_1 \in L^2([-d,0];\mathbb R)$ is the history of the goodwill level.
\end{enumerate}
Again, we use the same setup of the stochastic optimal control problem as the one in Section \ref{sec:formul} and the control set $U$ is here
\[
U= [0,\bar u]
\]  
for some $\bar u>0$.
The optimization problem is 
$$
\inf_{u \in \mathcal U}\mathbb{E} \left[\int_0^\infty e^{-\rho s} l(y(s),u(s)) d s\right],
$$
where  $\rho >0$ is a discount factor, $l(x,u)=h(u)-g(x)$, with  a continuous and convex cost function $h \colon U \rightarrow \mathbb R$ and a continuous and concave utility function $g \colon \mathbb R \rightarrow \mathbb R$ which satisfies Assumption \ref{hp:cost}.

Setting
$$b_0 \left ( x_0,\int_{-d}^0 a_1(\xi)x_1(\xi)\,d\xi ,u\right):=a_0 x_0+\int_{-d}^0 a_1(\xi)x_1(\xi)\,d\xi +c_0 u,$$ 
we are then  in the setting of Section \ref{sec:formul}. Therefore, using the infinite dimensional framework of Section \ref{sec:infinite_dimensional_framework}, we can use Theorem \ref{th:existence_uniqueness_viscosity_infinite} to characterize the value function $V$ as the unique viscosity solution to \eqref{eq:HJB}, and Theorem \ref{th:C1alpha} to obtain partial regularity of $V$.

\bibliographystyle{amsplain}

\end{document}